\documentclass{amsart}
\usepackage{graphicx}
\usepackage{amsmath}
\usepackage{amssymb}
\usepackage[all]{xy}

\newcommand{\Hom}{\operatorname{Hom}\nolimits}
\renewcommand{\Im}{\operatorname{Im}\nolimits}

\newcommand{\Coker}{\operatorname{Coker}\nolimits}

\newcommand{\pd}{\operatorname{pd}\nolimits}

\newcommand{\CIdim}{\operatorname{CI-dim}\nolimits}
\newcommand{\Gdim}{\operatorname{G-dim}\nolimits}

\newcommand{\depth}{\operatorname{depth}\nolimits}
\newcommand{\Ann}{\operatorname{Ann}\nolimits}

\newcommand{\Ext}{\operatorname{Ext}\nolimits}
\newcommand{\Tateext}{\operatorname{\widehat{Ext}}\nolimits}

\newcommand{\Id}{\operatorname{Id}\nolimits}
\renewcommand{\H}{\operatorname{H}\nolimits}

\newcommand{\rank}{\operatorname{rank}\nolimits}
\newcommand{\m}{\operatorname{\mathfrak{m}}\nolimits}
\newcommand{\n}{\operatorname{\mathfrak{n}}\nolimits}

\newcommand{\cx}{\operatorname{cx}\nolimits}

\newcommand{\s}{\operatorname{\Sigma}\nolimits}
\newcommand{\Noethfl}{\operatorname{Noeth^{\mathrm{fl}}}\nolimits}

\newtheorem{theorem}{Theorem}[section]

\newtheorem{corollary}[theorem]{Corollary}

\newtheorem{lemma}[theorem]{Lemma}
\newtheorem{proposition}[theorem]{Proposition}
\theoremstyle{definition}
\newtheorem*{definition}{Definition}
\theoremstyle{definition}
\newtheorem{question}[theorem]{Question}
\newtheorem{construction}[theorem]{Construction}
\theoremstyle{definition}

\theoremstyle{definition}

\theoremstyle{definition}

\theoremstyle{definition}

\theoremstyle{remark}

\theoremstyle{definition}

\theoremstyle{definition}

\begin{document}

\title{On growth in totally acyclic minimal complexes}
\author{Petter Andreas Bergh \& David A.\ Jorgensen}

\address{Petter Andreas Bergh \\ Institutt for matematiske fag \\
  NTNU \\ N-7491 Trondheim \\ Norway}
\email{bergh@math.ntnu.no}

\address{David A.\ Jorgensen \\ Department of mathematics \\ University
of Texas at Arlington \\ Arlington \\ TX 76019 \\ USA}
\email{djorgens@uta.edu}


\subjclass[2000]{13D07, 13D25, 18E30}

\keywords{Totally acyclic complexes, symmetric growth, finitely
generated cohomology}

\begin{abstract}
Given a commutative Noetherian local ring, we provide a criterion
under which a totally acyclic minimal complex of free modules has
symmetric growth.
\end{abstract}

\maketitle

\section{Introduction}\label{intro}

Given a commutative Noetherian local ring, when does a totally
acyclic minimal complex of free modules have symmetric growth? In
other words, given such a complex, does the left growth of the ranks
of its free modules equal the right growth? Avramov and Buchweitz
show in \cite{AvramovBuchweitz} that this is always the case for
totally acyclic minimal complexes of free modules over local
complete intersections. However, Jorgensen and \c{S}ega showed in
\cite{JorgensenSega} that it does not hold for a local ring in
general, even when the ring is Gorenstein. In fact, they constructed
such a ring and a totally acyclic minimal complex whose left growth
is exponential and right growth is constant. (The characteristics
of growth in the dual complex are thus reversed.)

Totally acyclic complexes can be used to compute Tate, or stable
(co)homology.  Since characteristics of the underlying complex
are often reflected in characteristics of the derived (co)homology,
it is of interest to study general properties of the underlying
complexes.

In this paper, we give a criterion under which symmetric growth
of totally acyclic minimal complexes
holds. This criterion is given in terms of the cohomology of the
image of a given differential in the complex. Namely, we show that
if the cohomology is finitely generated with respect to a ring
acting centrally on the derived category, and the ring action commutes
with dualization, then the complex has
symmetric polynomial growth.  As a corollary of our main theorem we
prove that whenever an image in the complex and its dual have complete
intersection dimension zero, then the complex has symmetric polynomial
growth.  Since all such images and their duals have complete intersection
dimension zero when the ring is a local complete intersection, we 
recover the result of Avramov and Buchweitz cited above.

\section{Notation and terminology}\label{notation}

Throughout this section, we fix a local (meaning commutative
Noetherian local) ring $(A, \m,k)$ and a finitely generated
$A$-module $M$. All modules we encounter are assumed to be
finitely generated. We denote by $M^*$ the $A$-dual of $M$, that
is, the $A$-module $\Hom_A(M,A)$. If the canonical homomorphism $M
\to M^{**}$ is an isomorphism, then $M$ is said to be
\emph{reflexive}. Furthermore, if $M$ is reflexive, and
$$\Ext_A^n(M,A) = 0 = \Ext_A^n(M^*,A)$$
for $n \ge 1$, then $M$ is a module of \emph{Gorenstein dimension
zero} (alternatively, $M$ is said to be \emph{totally reflexive}).
We shall write ``G-dimension" instead of ``Gorenstein dimension".

Suppose now that $M$ has G-dimension zero, and fix free
resolutions
$$
\cdots \to C_2 \to C_1 \to C_0 \xrightarrow{d_0} M \to 0
$$
$$
\cdots \to C_{-3} \to C_{-2} \to C_{-1} \to M^* \to 0
$$
of $M$ and $M^*$, respectively. Dualizing the latter resolution,
we obtain a complex
$$
0 \to M \to C_{-1} \to C_{-2} \to C_{-3} \to \cdots
$$
which is exact since $\Ext_A^n(M^*,A)=0$ for $n \ge 1$. Splicing
this sequence with the free resolution of $M$, we obtain a doubly
infinite exact sequence
$$
C \colon \cdots \to C_2 \to C_1 \to C_0
\xrightarrow{d_0} C_{-1} \to C_{-2} \to \cdots
$$
of free modules, in
which $M \cong \Im d_0$. The ``left part" of the dualized complex
$\Hom_A(C,A)$ is exact, since it is just the free
resolution of $M^*$. Moreover, the ``right part" is also exact,
since $\Ext_A^n(M,A)=0$ for $n \ge 1$. Consequently, the complex
$\Hom_A(C,A)$ is exact; thus $C$ is \emph{totally
acyclic}. Conversely, if $C$ is a totally acyclic complex
of free modules, then the image of any of its differentials has
G-dimension zero. Given such a complex, we denote by
$M_C$ the image of the zeroth differential.

When $M$ has G-dimension zero, then the totally acyclic complex
constructed above is a \emph{complete resolution} of $M$. By
\cite{Buchweitz}, \cite{CornickKropholler}, it is unique up to
homotopy equivalence. Consequently, for every $n \in \mathbb{Z}$ and
every $A$-module $N$, the \emph{Tate cohomology} module (or
\emph{stable cohomology} module)
$$
\Tateext_A^n(M,N) \stackrel{\text{def}}{=} \H_{-n}
\left ( \Hom_A(C,N)
\right )
$$
is independent of the choice of complete resolution of
$M$. By construction, there is an isomorphism $\Tateext_A^n(M,N)
\cong \Ext_A^n(M,N)$ whenever $n>0$. We refer to
\cite{AvramovMartsinkovsky} for general properties of Tate
cohomology modules.

When we start with minimal free resolutions of $M$ and $M^*$,
we end up with an \emph{almost minimal} totally acyclic complex $C$,
meaning that the image of the differential $C_{n+1} \to C_n$ is
contained in $\m C_n$ for all $n\ne -1$. The complex $C$ will be
\emph{minimal} if in addition the image of $d_0$ lies in $\m C_{-1}$
(and this happens precisely when $M$ has no nonzero free summands).
Since minimal free resolutions are unique up to isomorphism, the integers
$\beta_n (M) \stackrel{\text{def}}{=} \rank C_n$ are well-defined
for all $n\in \mathbb Z$.
The integer $\beta_n(M)$ is called the $n$th \emph{Betti number} of $M$,
and for $n\ne 0,-1$ it is equal to the dimension of the $k$-vector space
$\Tateext_A^n(M,k)$.
We define the \emph{positive complexity} and \emph{negative complexity} of $M$
as
\begin{eqnarray*}
\cx_A^{+} M & \stackrel{\text{def}}{=} & \inf \{ t \in \mathbb{N}
\cup \{ 0 \} \mid \exists a \in \mathbb{R} \text{ such that }
\beta_n (M) \le an^{t-1} \text{ for } n \gg 0 \}, \\
\cx_A^{-} M & \stackrel{\text{def}}{=} & \inf \{ t \in \mathbb{N}
\cup \{ 0 \} \mid \exists a \in \mathbb{R} \text{ such that }
\beta_n (M) \le a(-n)^{t-1} \text{ for } n \ll 0 \}.
\end{eqnarray*}
These measure on a polynomial scale the left and the right, respectively,
rate of growth of the minimal complete resolution of $M$. We also define the
corresponding Poincar{\'e} series
\begin{eqnarray*}
P_A^{+}(M,t) & \stackrel{\text{def}}{=} & \sum_{n\ge 0}\beta_n(M) t^n, \\
P_A^{-}(M,t) & \stackrel{\text{def}}{=} & \sum_{n\ge 0}\beta_{-n}(M) t^n,
\end{eqnarray*}
that is, the generating functions of the Betti numbers. Note that
the positive complexity coincides with the ``ordinary" complexity of
a module, that is, the polynomial rate of growth in its minimal free
resolution.

The aim of this paper is to give sufficient conditions for a totally
acyclic minimal complex of free modules to have symmetric growth. In
other words, given such a complex $C$, we give a criterion
for when $\cx_A^{+} M_C$ and $\cx_A^{-} M_C$
are equal. Namely, we show that $C$ has symmetric growth whenever
the cohomology of $M_C$ behaves well with respect to dualization and is 
``finitely generated.'' To explain these notions, consider the bounded derived
category $D^b(A)$ of finitely generated $A$-modules. This is a
triangulated category, whose suspension functor $\s$ is just the
left shift of a complex. Given complexes $X,Y$ and an integer $n$,
we denote the graded $A$-module $\Hom_{D^b(A)}(X, \s^nY)$ by
$\Ext_A^n(X,Y)$; for modules, this is just the usual $\Ext$. We
denote by $\Ext_A(X,Y)$ the graded module $\oplus_{n \in
\mathbb{Z}} \Ext_A^n(X,Y)$, and if $n_0$ is an integer then we set
$\Ext_A^{\ge n_0}(X,Y) = \oplus_{n\ge n_0}\Ext_A^n(X,Y)$. The
\emph{graded center} of $D^b(A)$, denoted $Z ( D^b(A) )$, is a
graded ring $Z ( D^b(A) ) = \oplus_{n \in \mathbb{Z}} Z^n ( D^b(A)
)$, whose degree $n$ component $Z^n( D^b(A) )$ is the set of natural
transformations $\Id \xrightarrow{f} \s^n$ satisfying $f_{\s X} =
(-1)^n \s f_X$ on the level of objects. For details and properties
of the graded center, see \cite{BuchweitzFlenner}.

Now let $H = \oplus_{n\ge 0}H_n$ be a positively graded ring
which is graded-commutative, that is, $\eta \theta = (-1)^{|\eta|
|\theta|} \theta \eta$ for all homogeneous elements $\eta, \theta
\in H$. We say that $H$ \emph{acts centrally} on $D^b(A)$ if there
exists a homomorphism $H \to Z ( D^b(A) )$ of graded rings. In this
case, for every complex $X \in D^b(A)$ there is a graded ring
homomorphism
$$H \xrightarrow{\varphi_X} \Ext_A(X,X),$$
and for every complex $Y$ and all homogeneous elements $\eta \in H,
\theta \in \Ext_A(X,Y)$ the equality $\varphi_Y( \eta ) \circ
\theta = (-1)^{|\eta| |\theta|} \theta \circ \varphi_X ( \eta )$
holds. In other words, the left and right $H$-module structures on
$\Ext_A(X,Y)$ coincide up to a sign.

\begin{definition} We say that $\Ext_A(X,Y)$
is an \emph{eventually Noetherian $H$-module of finite length}, and
write $\Ext_A(X,Y) \in \Noethfl H$, if the following holds: there
is a number $n_0$ such that the $H$-module $\Ext_A^{\ge n_0}(X,Y)$
is Noetherian and the length of  $\Ext_A^n(X,Y)$ as an $H_0$-module,
denoted by $\ell_{H_0}$, is finite for each $n \ge n_0$.
\end{definition}

Let $M$ be an $A$-module of G-dimension zero,
$C$ be a totally acyclic complex of free modules with
$M=M_C$, and $\theta$ be an element
of $\Ext_A^n(M,M)$ for some $n$. This element $\theta$
corresponds to a chain map
$C\to \s^n C$ (and equivalent elements in $\Ext_A^n(M,M)$ correspond
to homotopic chain maps).
Dualizing, we obtain a chain map $\s^{-n}(C^*)=(\s^n C)^*\to C^*$.
Applying the shift functor $\s^n$ we now get a chain
map $C^*\to\s^n (C^*)$, and this corresponds to an element in
$\Ext_A^n(M^*,M^*)$.  One checks easily that this defines
an anti-isomorphism
$$
\mathcal D: \Ext_A(M,M) \xrightarrow{} \Ext_A(M^*,M^*)
$$
of graded rings.

\begin{definition} Let $M$ be an $A$-module of G-dimension zero.
We say that the central ring action from $H$ on
$D^b(A)$  \emph{commutes with dualization of $M$}, provided that the diagram
$$\xymatrix{
& H \ar[dl]_{\varphi_M} \ar[dr]^{\varphi_{M^*}} \\
\Ext_A(M,M) \ar[rr]^{\mathcal D} && \Ext_A(M^*,M^*) }$$ commutes,
that is, $\mathcal D(\varphi_M ( \eta )) = \varphi_{M^*} ( \eta )$ for every
homogeneous element $\eta \in H$.
\end{definition}

\section{Symmetric growth}\label{mainsection}

In this section, we prove the main result on symmetric growth in
totally acyclic minimal complexes of finitely generated free
modules. We start with the following proposition, which provides a
criterion for an extension to induce an eventually surjective chain
map on a minimal free resolution.

\begin{proposition}\label{surjectivechainmap}
Let $(A, \m, k)$ be a local ring, and let $M$ be a finitely
generated $A$-module with minimal free resolution $F$. Let
$\eta \in \Ext_A(M,M)$ be a homogeneous element of positive
degree, and suppose that it induces injective maps
$$\Ext_A^n(M,k) \xrightarrow{\cdot \eta} \Ext_A^{n+ |\eta|}(M,k)$$
for $n \gg 0$. Then any chain map on $F$ induced by $\eta$
is eventually surjective.
\end{proposition}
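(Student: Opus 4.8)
The plan is to translate the hypothesis into a statement about a single representing chain map and then apply Nakayama's lemma degreewise. Write $d=|\eta|$ and fix any chain map $\mu\colon F\to\s^dF$ representing $\eta$; its components are $A$-linear maps $\mu_n\colon F_n\to F_{n-d}$ compatible with the differentials of $F$.

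I would begin by recording two consequences of the minimality of $F$. First, every differential of $F$ --- and hence of $\s^dF$ --- has entries in $\m$, so the complex $\Hom_A(F,k)$ has zero differentials and $\Ext_A^n(M,k)=\Hom_A(F_n,k)$ for all $n\ge 0$. Second, if $\mu'$ is any other chain map representing $\eta$, then $\mu-\mu'$ is nullhomotopic, hence a sum of composites each involving a differential of $F$ or of $\s^dF$; applying $k\otimes_A-$ therefore kills it, so $k\otimes_A\mu_n$ is independent of the chosen representative.

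The core of the argument is a chain of equivalences, for a fixed large $m$. Under the identification above, the multiplication map $\cdot\eta\colon\Ext_A^{m-d}(M,k)\to\Ext_A^{m}(M,k)$ is, up to sign, the map $\Hom_A(\mu_m,k)\colon\Hom_A(F_{m-d},k)\to\Hom_A(F_m,k)$, since on the resolution $F$ the composition product with $\eta$ is computed by precomposing a representing cochain with $\mu$. Next, for a finitely generated free module $G$ one has a natural isomorphism $\Hom_A(G,k)\cong\Hom_k(k\otimes_AG,k)$, under which $\Hom_A(\mu_m,k)$ becomes the $k$-linear dual of $k\otimes_A\mu_m\colon k\otimes_AF_m\to k\otimes_AF_{m-d}$; over the field $k$, a linear map has injective dual precisely when it is surjective. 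Finally $\Coker(k\otimes_A\mu_m)=k\otimes_A\Coker\mu_m$, and $\Coker\mu_m$ is finitely generated, so by Nakayama $k\otimes_A\mu_m$ is surjective if and only if $\mu_m$ is. Chaining these: $\cdot\eta$ is injective on $\Ext_A^{m-d}(M,k)$ if and only if $\mu_m$ is surjective.

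The hypothesis supplies the left-hand side for all large $m$, so $\mu_m$ is surjective for $m\gg 0$; by the second remark this conclusion does not depend on the choice of chain map representing $\eta$. The only step requiring genuine care is identifying the abstract action $\cdot\eta$ on $\Ext_A(M,k)$ with the concrete map $\Hom_A(\mu_m,k)$ --- that is, matching the $\Ext_A(M,M)$-module structure with comparison maps of projective resolutions --- but this is standard homological bookkeeping, and the remaining steps are purely formal.
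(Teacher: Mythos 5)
Your proposal is correct and follows essentially the same route as the paper's proof: use minimality of $F$ to identify $\Ext_A^n(M,k)$ with $\Hom_A(F_n,k)$, identify $\cdot\eta$ with the $k$-dual of the chain map components, and conclude surjectivity from injectivity of the dual. The only cosmetic differences are that you pass explicitly through the isomorphism $\Hom_A(G,k)\cong\Hom_k(k\otimes_AG,k)$ and invoke Nakayama by name, whereas the paper applies $\Hom_A(-,k)$ to the right-exact sequence $F_{n+|\eta|}\to F_n\to\Coker f_n\to 0$ and uses that $\Hom_A(X,k)\neq 0$ for nonzero finitely generated $X$ (which is Nakayama in disguise); your extra observation that the reduction $k\otimes_A\mu_n$ is independent of the chosen representative is a worthwhile point, since the statement is about \emph{any} chain map induced by $\eta$.
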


\begin{proof}
Let $\Omega_A^{|\eta|}(M) \xrightarrow{f_{\eta}} M$ be a map
representing the element $\eta$. Lifting this map along $F$
gives a chain map
$$
\xymatrix{
\cdots \ar[r] & F_{|\eta|+2} \ar[r] \ar[d]^{f_2} & F_{|\eta|+1}
\ar[r] \ar[d]^{f_1} & F_{|\eta|} \ar[r] \ar[d]^{f_0} &
\Omega_A^{|\eta|}(M) \ar[r] \ar[d]^{f_{\eta}} & 0 \\
\cdots \ar[r] & F_2 \ar[r] & F_1 \ar[r] & F_0 \ar[r] & M \ar[r] &
0}
$$
of degree $- |\eta|$ induced by $\eta$. Since the complex
$F$ is minimal, that is, the image of the differential $F_n
\to F_{n-1}$ is contained in $\m F_{n-1}$ for all $n$, the
differentials in the complex $\Hom_A( F, k)$ are all
trivial. We may therefore identify $\Ext_A^n(M,k)$ with
$\Hom_A(F_n,k)$ for all $n$. Moreover, under this identification,
for all $n$ the map
$$\Ext_A^n(M,k) \xrightarrow{\cdot \eta} \Ext_A^{n+ |\eta|}(M,k)$$
is the map
$$\Hom_A(F_n,k) \xrightarrow{f_n^*} \Hom_A(F_{n+
|\eta|},k)$$ induced by $f_n$. Applying $\Hom_A(-,k)$ to the right
exact sequence
$$F_{n+|\eta|} \xrightarrow{f_n} F_n \to \Coker f_n \to 0$$
induces the left exact sequence
$$0 \to \Hom_A( \Coker f_n, k) \to \Hom_A(F_n,k) \xrightarrow{f_n^*} \Hom_A(F_{n+
|\eta|},k).$$ By assumption, the map $f_n^*$ is injective for large
$n$, and so $\Hom_A( \Coker f_n, k)$ must vanish for $n \gg 0$.
However, the group $\Hom_A(X,k)$ is nonzero whenever $X$ is a
nonzero finitely generated $A$-module. Consequently, the map $f_n$
must be surjective for $n \gg 0$.
\end{proof}

The next result shows that when the cohomology of a module is
finitely generated, then its (positive) complexity is finite and its
Poincar{\'e} series is rational.

\begin{lemma}\label{finitecx}
Let $(A, \m, k)$ be a local ring, and let $M$ be a finitely
generated $A$-module.
\begin{enumerate}
\item[(i)] Suppose that $\Ext_A(M, k)$ belongs to $\Noethfl H$ for some
graded-commutative ring $H$ acting centrally on $D^b(A)$. Then
$\cx^{+}_A M$ is finite and the Poincar{\'e} series $P^{+}_A(M,t)$
is rational. Moreover, $\cx^{+}_A M$ equals the order of the pole of
$P^{+}_A(M,t)$ at $t=1$.
\item[(ii)] Suppose that $M$ has G-dimension zero and that
$\Ext_A(M^*, k)$ belongs to $\Noethfl H$ for some
graded-commutative graded ring $H$ acting centrally on $D^b(A)$.
Then $\cx^{-}_A M$ is finite and the Poincar{\'e} series
$P^{-}_A(M,t)$ is rational. Moreover, $\cx^{-}_A M$ equals the order
of the pole of $P^{-}_A(M,t)$ at $t=1$.
\end{enumerate}
\end{lemma}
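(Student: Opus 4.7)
For part (i), my plan is to recognize the Poincar\'e series $P_A^+(M,t)$ as (essentially) the Hilbert series of the graded $H$-module $\Ext_A(M,k)$ and apply a Hilbert--Serre rationality theorem. For all $n\ge 0$ one has $\beta_n(M)=\dim_k\Ext_A^n(M,k)$. The hypothesis $\Ext_A(M,k)\in\Noethfl H$ ensures that $\Ext_A^n(M,k)$ has finite $H_0$-length for $n\ge n_0$, and since the $H_0$-action on the $k$-vector space $\Ext_A^n(M,k)$ is $k$-linear (so every simple $H_0$-subquotient is a nonzero $k$-vector space), this forces $\beta_n(M)<\infty$ in that range. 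Thus $P_A^+(M,t)$ agrees with the Hilbert series of the Noetherian graded $H$-module $\Ext_A^{\ge n_0}(M,k)$ up to a polynomial correction of degree less than $n_0$.

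The heart of part (i), and the main obstacle, is showing that this Hilbert series is rational of the form $q(t)/\prod_i(1-t^{d_i})$. Since $\Noethfl H$ is a condition on the module rather than on $H$ itself, my plan is to replace $H$ by the quotient $\bar H$ of $H$ modulo the annihilator of $\Ext_A^{\ge n_0}(M,k)$; since $\bar H$ embeds into a finite power of this Noetherian module, it is Noetherian as a module over itself, hence a Noetherian graded-commutative ring acting faithfully on $\Ext_A^{\ge n_0}(M,k)$. A standard generating-set-and-filtration argument over $\bar H$, with successive cyclic subquotients, then yields the desired rational form for the Hilbert series. The order of the pole at $t=1$ of such a series is known to compute the polynomial rate of growth of its coefficients, which by definition is $\cx_A^+M$.

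For part (ii), I would reduce to part (i) applied to $M^*$ by exploiting G-dimension zero. As reviewed in Section~\ref{notation}, the negative part of a minimal complete resolution of $M$ is the $A$-dual of a minimal free resolution of $M^*$, so comparing ranks yields $\beta_{-n-1}(M)=\beta_n(M^*)$ for all $n\ge 0$, whence
$$
P_A^-(M,t)=\beta_0(M)+t\,P_A^+(M^*,t).
$$
Rationality of $P_A^-(M,t)$, the order of its pole at $t=1$, and the equality $\cx_A^-M=\cx_A^+M^*$ all follow immediately from part (i) applied to $M^*$, whose hypothesis is precisely the assumption made in (ii).
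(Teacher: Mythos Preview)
Your approach is essentially the same as the paper's: for (i), pass to $\bar H=H/\Ann_H\Ext_A^{\ge n_0}(M,k)$, show it is Noetherian (the paper cites \cite[Remark 2.1]{BIKO} for this, while you supply the embedding-into-$E^m$ argument directly), apply Hilbert--Serre, and read off the complexity as the order of the pole at $t=1$; for (ii), reduce to (i) via $\beta_n(M^*)=\beta_{-(n+1)}(M)$, exactly as the paper does.

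One small imprecision: your parenthetical ``every simple $H_0$-subquotient is a nonzero $k$-vector space'' does not follow from $k$-linearity of the $H_0$-action alone (e.g.\ $H_0=\mathbb{R}$ acting on $k=\mathbb{C}$). The correct point, which the paper uses, is that the $H_0$-action on $\Ext_A^n(M,k)$ factors through $\Hom_A(k,k)=k$; hence $\bar H_0$ is a subfield $k_0\subseteq k$, and $\ell_{H_0}=\dim_{k_0}$ is a fixed multiple of $\dim_k$ (or else all $\Ext_A^n(M,k)$ vanish for $n\ge n_0$). This is what lets you run Hilbert--Serre with a well-behaved length function and then translate the resulting rationality and pole order back to the $\dim_k$-Poincar\'e series $P_A^+(M,t)$. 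With that adjustment your argument is complete and matches the paper's.
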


\begin{proof}
It suffices to prove (i); claim (ii) follows from (i) and the fact
that $\beta_n(M^*) = \beta_{-(n+1)}(M)$ for all $n \in \mathbb{Z}$.

Since the scalar action from $H_0$ on $\Ext_A^n(M,k)$ factors
through $\Hom_A(k,k)$, the positive complexity of $M$ is the
complexity of the sequence $\{ \ell_{H_0} \Ext_A^n(M,k)
\}_{n=0}^{\infty}$. Therefore, by \cite[Lemma 2.6]{BIKO}, the
positive complexity of $M$ is finite.

Let $n_0$ be an integer such that the $H$-module $\Ext_A^{\ge
n_0}(M,k)$ is Noetherian and $\ell_{H_0} \left ( \Ext_A^n(M,k)
\right ) < \infty$ for each $n \ge n_0$, and denote the ideal
$\Ann_H \Ext_A^{\ge n_0}(M,k)$ in $H$ by $I$. By \cite[Remark
2.1]{BIKO}, the quotient ring $H/I$ is Noetherian, and its degree
zero part $(H/I)_0$ is Artin. The rationality of $P^{+}_A(M,t)$
now follows from the Hilbert-Serre Theorem (cf.\ \cite[Theorem
11.1]{AtiyahMacdonald}). The last part is a standard result on
Poincar{\'e} series (cf.\ \cite[Proposition 5.3.2]{Benson}).
\end{proof}

In order to prove the main result, we also need the following
elementary lemma. It shows that the category of modules of
G-dimension zero, and the category of modules whose cohomology is
finitely generated, are closed under extensions.

\begin{lemma}\label{exact sequence}
Let $(A, \m, k)$ be a local ring, and let
$$0 \to L \to M \to N \to 0$$
be an exact sequence of finitely generated $A$-modules.
\begin{enumerate}
\item[(i)] If $L$ and $N$ are both of G-dimension zero, then so is
$M$.
\item[(ii)] If $\Ext_A(L \oplus N,k)$ belongs to $\Noethfl H$
for some graded-commutative ring $H$ acting centrally on $D^b(A)$,
then so does $\Ext_A(M,k)$.
\end{enumerate}
\end{lemma}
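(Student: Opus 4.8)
For part (i), the plan is to use the standard characterization of G-dimension zero via vanishing of $\Ext$ together with reflexivity. First I would apply $\Hom_A(-,A)$ to the short exact sequence $0 \to L \to M \to N \to 0$ and take the long exact sequence in $\Ext_A^*(-,A)$. Since $L$ and $N$ both have G-dimension zero, $\Ext_A^n(L,A) = 0 = \Ext_A^n(N,A)$ for $n \ge 1$, so the long exact sequence forces $\Ext_A^n(M,A) = 0$ for $n \ge 1$ and yields a short exact sequence $0 \to N^* \to M^* \to L^* \to 0$ on duals. Dualizing this sequence in turn and using that $L^*$ and $N^*$ also have G-dimension zero gives $\Ext_A^n(M^*,A) = 0$ for $n \ge 1$. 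It remains to check that $M$ is reflexive: this follows from the commutative diagram relating the biduality maps $L \to L^{**}$, $M \to M^{**}$, $N \to N^{**}$ (with exact rows $0 \to L \to M \to N \to 0$ and $0 \to L^{**} \to M^{**} \to N^{**} \to 0$, the latter exact because $\Ext_A^1(L^*,A)=0$) and the five lemma, since the outer two biduality maps are isomorphisms.

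For part (ii), the idea is that the hypothesis $\Ext_A(L \oplus N, k) \in \Noethfl H$ means each of $\Ext_A(L,k)$ and $\Ext_A(N,k)$ lies in $\Noethfl H$, and we want to squeeze $\Ext_A(M,k)$ between them. Applying $\Hom_{D^b(A)}(-, \s^* k) = \Ext_A^*(-,k)$ to the triangle $L \to M \to N \to \s L$ in $D^b(A)$ gives a long exact sequence of $H$-modules
$$\cdots \to \Ext_A^{n-1}(L,k) \to \Ext_A^n(N,k) \to \Ext_A^n(M,k) \to \Ext_A^n(L,k) \to \Ext_A^{n+1}(N,k) \to \cdots$$
which is a sequence of graded $H$-module maps, up to the sign twist coming from the central action (so that the connecting maps are $H$-linear after adjusting by units). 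Truncating at a degree $n_0$ past which both $\Ext_A^{\ge n_0}(L,k)$ and $\Ext_A^{\ge n_0}(N,k)$ are Noetherian over $H$, we see $\Ext_A^{\ge n_0}(M,k)$ sits in an exact sequence with a sub-quotient of the Noetherian module $\Ext_A^{\ge n_0}(N,k)$ on the left and a submodule of the Noetherian module $\Ext_A^{\ge n_0}(L,k)$ on the right; since $H$ is Noetherian here (it maps into the Noetherian graded center data, or one restricts to the image acting on $\Ext_A(L \oplus N,k)$) and extensions of Noetherian modules are Noetherian, $\Ext_A^{\ge n_0}(M,k)$ is Noetherian. Finiteness of $\ell_{H_0}\Ext_A^n(M,k)$ for $n \ge n_0$ is immediate from the same exact sequence, since length is additive on short exact sequences and $\Ext_A^n(M,k)$ is an $H_0$-subquotient of $\Ext_A^n(L,k) \oplus \Ext_A^n(N,k)$, both of finite length.

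The main obstacle I anticipate is purely bookkeeping rather than conceptual: making sure the connecting homomorphisms in the $\Ext$ long exact sequence are genuinely $H$-module homomorphisms (or become so after correcting by the sign $(-1)^{|\eta|}$ built into the central action), so that "Noetherian" and "finite length over $H_0$" propagate correctly through the sequence. This is where one invokes the compatibility $\varphi_Y(\eta)\circ\theta = (-1)^{|\eta||\theta|}\theta\circ\varphi_X(\eta)$ recorded in Section \ref{notation}, applied with $X,Y$ ranging over $L,M,N$, to see that the triangle in $D^b(A)$ gives a long exact sequence in the category of graded $H$-modules (with the usual sign conventions). Once that is in place, both claims reduce to the elementary facts that submodules, quotients, and extensions of Noetherian modules are Noetherian, and that length is additive.
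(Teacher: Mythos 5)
Your proposal is correct and follows essentially the same path as the paper: for part (i) you write out the standard proof (long exact sequence in $\Ext_A^*(-,A)$ plus the five lemma for reflexivity) that the paper simply delegates to Christensen's Lemma 1.1.10, and for part (ii) you use the same exact sequence $\Ext_A(N,k) \to \Ext_A(M,k) \to \Ext_A(L,k)$ of graded $H$-modules, just spelling out more carefully the $H$-linearity bookkeeping and the Noetherian/length bookkeeping that the paper leaves implicit. One small superfluous aside: your parenthetical ``since $H$ is Noetherian here'' is not needed, because submodules, quotients, and extensions of Noetherian modules are Noetherian over an arbitrary ring, which is all the argument requires.
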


\begin{proof}
For a proof of (i), see for example \cite[Lemma
1.1.10]{Christensen}. As for (ii), note that the exact sequence
induces an exact sequence
$$
\Ext_A(N,k) \to \Ext_A(M,k) \to \Ext_A(L,k)
$$
of graded $H$-modules. It follows that the middle term must be
eventually Noetherian of piecewise finite length.
\end{proof}

We now prove the main theorem. It shows that a totally acyclic
minimal complex of finitely generated free modules has symmetric
growth, provided its cohomology is finitely generated, and the
central action from $H$ commutes with dualization.

\begin{theorem}\label{main}
Let $(A, \m, k)$ be a local ring, and $C$ be a totally
acyclic minimal complex of finitely generated free $A$-modules.
Suppose that $\Ext_A(M_C \oplus M^*_C, k)$
belongs to $\Noethfl H$ for some graded-commutative ring $H$ acting
centrally on $D^b(A)$ and such that its action commutes with dualization
of $M$. Then $C$ has symmetric growth.
\end{theorem}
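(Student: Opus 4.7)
Write $M = M_C$ and set $c^+ := \cx^+_A M$, $c^- := \cx^-_A M$; both are finite by Lemma~\ref{finitecx}. I proceed by induction on $d := \max\{c^+, c^-\}$. When $d = 0$, say $c^+ = 0$, the module $M$ has finite projective dimension; being of G-dimension zero it must then be free, hence zero by minimality of $C$, whence $c^- = 0$ as well. The symmetric argument handles $c^- = 0$, so the base case is trivial, and in the inductive step one may assume $c^+, c^- \geq 1$.

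The crux is to find a single homogeneous $\eta \in H$ of positive degree that simultaneously induces eventually injective multiplications on $\Ext_A(M, k)$ and on $\Ext_A(M^*, k)$. Since $\Ext_A^{\geq n_0}(M \oplus M^*, k)$ is Noetherian over $H$ modulo its annihilator, which is Noetherian of Krull dimension $\max\{c^+, c^-\} \geq 1$ by the argument in Lemma~\ref{finitecx}, some associated prime of this $H$-module fails to contain the irrelevant ideal $H_+$; after discarding the $H_+$-torsion submodule (concentrated in bounded degrees), graded prime avoidance produces such an $\eta$. The standard reduction of an $|\eta|$-fold extension to a short exact sequence then yields
\[
0 \to M \to X \to \Omega_A^{|\eta|-1}(M) \to 0
\]
classifying $\varphi_M(\eta) \in \Ext_A^{|\eta|}(M, M)$. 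By Lemma~\ref{exact sequence}(i), $X$ has G-dimension zero, so dualizing gives
\[
0 \to \Omega_A^{-|\eta|+1}(M^*) \to X^* \to M^* \to 0,
\]
whose class is $\mathcal D(\varphi_M(\eta)) = \varphi_{M^*}(\eta)$ by the commutation hypothesis. Applying $\Hom_A(-, k)$ to each sequence, the connecting homomorphisms are (up to sign) multiplication by $\varphi_M(\eta)$ and $\varphi_{M^*}(\eta)$; their eventual injectivity collapses the long exact sequences, giving $\beta_n(X) = \beta_{n+|\eta|-1}(M) - \beta_{n-1}(M)$ for $n \gg 0$, and the analogous identity with $M, X$ replaced by $M^*, X^*$. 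Hence $t^{|\eta|-1} P^+_A(X, t)$ and $(1 - t^{|\eta|}) P^+_A(M, t)$ agree modulo a polynomial (similarly on the negative side), and the simple zero of $1 - t^{|\eta|}$ at $t = 1$ combined with Lemma~\ref{finitecx} yields $\cx^+_A X = c^+ - 1$ and $\cx^-_A X = c^- - 1$.

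To invoke the inductive hypothesis on the minimal complete resolution of $X$, note that Lemma~\ref{exact sequence} supplies both G-dimension zero and $\Ext_A(X \oplus X^*, k) \in \Noethfl H$. The main remaining obstacle is to verify that the central action from $H$ commutes with dualization of $X$; this should follow from the naturality of $\eta$ as a natural transformation, applied to the pushout diagram, together with the commutation hypothesis for $M$ and hence for its syzygy $\Omega_A^{|\eta|-1}(M)$. Granting this, induction gives $\cx^+_A X = \cx^-_A X$, whence $c^+ = c^-$. If propagating commutation through pushouts proves too delicate, one may instead select at the outset elements $\eta_1, \ldots, \eta_d \in H$ behaving as an eventual regular sequence on $\Ext_A(M \oplus M^*, k)$ and iterate the pushout $d$ times to obtain modules $M = X_0, X_1, \ldots, X_d$ with $\cx^\pm_A X_i = c^\pm - i$, forcing $c^+ = c^- = d$.
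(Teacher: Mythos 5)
Your argument follows the paper's proof quite closely: induct on complexity, pick a homogeneous $\eta \in H$ of positive degree acting eventually injectively on $\Ext_A(M,k)$ and $\Ext_A(M^*,k)$ (the paper simply cites \cite[Lemma~2.5]{BIKO} where you sketch the prime-avoidance argument), form the pushout module $K$ (your $X$) from a short exact sequence classifying $\varphi_M(\eta)$, use the commutation hypothesis to control the connecting maps at the negative end, deduce that both complexities drop by exactly one, and invoke the inductive hypothesis. The cosmetic differences -- inducting on $\max\{\cx^+, \cx^-\}$ rather than on $\cx^+$ alone, and splitting the computation into the two dual short exact sequences in ordinary $\Ext$ rather than a single long exact sequence in Tate cohomology -- do not change the substance.

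The one place where you and the paper part ways is the point you yourself flag as ``the main remaining obstacle'': whether the commutation-with-dualization hypothesis passes from $M$ to $K$. This is exactly the issue, and it is worth emphasizing that the paper's proof does not address it either -- after verifying via Lemma~\ref{exact sequence} that $K$ has G-dimension zero and that $\Ext_A(K\oplus K^*,k)\in \Noethfl H$, the paper applies the inductive hypothesis to $K$ without checking that $\mathcal D(\varphi_K(\eta'))=\varphi_{K^*}(\eta')$ for $\eta'\in H$. Since the authors pose the status of this condition as an open question in Section~4, it cannot be regarded as automatic. Your two proposed repairs are reasonable in spirit but neither is complete as stated: the naturality argument is delicate because completing a commutative square to a map of triangles is not unique, so knowing $\mathcal D\circ\varphi=\varphi\circ\operatorname{dual}$ on $M$ and on $\Omega_A^{|\eta|-1}(M)$ does not immediately pin it down on the cone $K$; and the regular-sequence iteration does not sidestep the problem, since constructing $X_{i+1}$ from $X_i$ and controlling its negative Betti numbers again requires commutation for $X_i$, not just for $M$. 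So your proposal is at least as careful as the published proof, and the gap you identify is genuine and shared; filling it cleanly would be a real addition.
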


\begin{proof}
Denote the module $M_C$ by $M$. We prove the result by
induction on the positive complexity of $M$, which is finite by
Lemma \ref{finitecx}. If $\cx_A^{+} M =0$, then $M$ has finite
projective dimension and is therefore free. Thus, in this case, the
complex $C$ is bounded, and the result trivially follows.

Next, suppose that $\cx_A^{+}M$ is nonzero. By \cite[Lemma
2.5]{BIKO}, there exists a homogeneous element $\eta \in H$, of
positive degree, inducing injective maps
\begin{eqnarray*}
\Ext_A^n(M,k) & \xrightarrow{\cdot \eta} &
\Ext_A^{n+ |\eta|}(M,k) \\
\Ext_A^n(M^*,k) & \xrightarrow{\cdot \eta} & \Ext_A^{n+
|\eta|}(M^*,k)
\end{eqnarray*}
for $n \gg 0$. Choose maps $\Omega_A^{|\eta|}(M)
\xrightarrow{f_{\eta}} M$ and $\Omega_A^{|\eta|}(M^*)
\xrightarrow{g_{\eta}} M^*$ representing the elements $\varphi_M(
\eta )$ and $\varphi_{M^*}( \eta )$ in $\Ext_A(M,M)$ and
$\Ext_A(M^*,M^*)$, and note that the complexes $C_{\ge
0}$ and $(C_{\le -1} )^*$ are minimal free resolutions of
$M$ and $M^*$, respectively. By Proposition
\ref{surjectivechainmap}, the maps $f_{\eta}$ and $g_{\eta}$ induce
eventually surjective chain maps
$$
\xymatrix{
\cdots \ar[r] & C_{|\eta|+2} \ar[r] \ar[d]^{f_2} & C_{|\eta|+1}
\ar[r] \ar[d]^{f_1} & C_{|\eta|} \ar[r] \ar[d]^{f_0} &
\Omega_A^{|\eta|}(M) \ar[r] \ar[d]^{f_{\eta}} & 0 \\
\cdots \ar[r] & C_2 \ar[r] & C_1 \ar[r] & C_0 \ar[r] & M \ar[r] & 0
\\
\cdots \ar[r] & C_{-(|\eta|+3)}^* \ar[r] \ar[d]^{g_2} &
C_{-(|\eta|+2)}^* \ar[r] \ar[d]^{g_1} & C_{-(|\eta|+1)}^* \ar[r]
\ar[d]^{g_0} &
\Omega_A^{|\eta|}(M^*) \ar[r] \ar[d]^{g_{\eta}} & 0 \\
\cdots \ar[r] & C_{-3}^* \ar[r] & C_{-2}^* \ar[r] & C_{-1}^* \ar[r]
& M^* \ar[r] & 0}
$$
on $C_{\ge 0}$ and $(C_{\le-1} )^*$, respectively. Since $g_n$ is
split surjective for $n \gg
0$, when dualizing the lower diagram we obtain a chain map
$$\xymatrix{
0 \ar[r] & M \ar[r] \ar[d]^{g_{\eta}^*} & C_{-1} \ar[r]
\ar[d]^{g_0^*} & C_{-2} \ar[r] \ar[d]^{g_1^*} & C_{-3} \ar[r]
\ar[d]^{g_2^*} & \cdots \\
0 \ar[r] & \Omega_A^{-|\eta|}(M) \ar[r] & C_{-(|\eta|+1)} \ar[r] &
C_{-(|\eta|+2)} \ar[r] & C_{-(|\eta|+3)} \ar[r] & \cdots}$$ which is
eventually split injective.

Choose a short exact sequence
$$0 \to M \to K \to \Omega_A^{|\eta|-1}(M) \to 0$$
representing the element $\varphi_M( \eta )$ in $\Ext_A(M,M)$. By
Lemma \ref{exact sequence}, the module $K$ also has G-dimension
zero, and both $\Ext_A(K,k)$ and $\Ext_A(K^*,k)$ belong to
$\Noethfl H$. From the sequence we obtain a long exact sequence
$$\cdots \to \Tateext_A^{n}(M,k) \xrightarrow{\partial_n}
\Tateext_A^{n+ |\eta|}(M,k) \to \Tateext_A^{n+1}(K,k) \to
\Tateext_A^{n+1}(M,k) \xrightarrow{\partial_{n+1}} \cdots$$ in Tate
cohomology. For $n \ge 0$, the connecting homomorphism $\partial_n$
is induced by $f_n$, and so since $f_n$ is surjective for large $n$,
we see that $\partial_n$ is injective for $n \gg 0$. Moreover, the
central action from $H$ on $D^b(A)$ commutes with dualizations,
that is, $\mathcal D(\varphi_M( \eta )) = \varphi_{M^*}( \eta )$, hence
$\partial_n$ is induced by $g_{-(n+ |\eta|+1)}^*$ for $n \ll 0$. The
map $g_n^*$ is split injective for large $n$, and consequently
$\partial_n$ is surjective for $n \ll 0$.

Choose a number $n_0$ with the property that $\partial_n$ is
injective for $n \ge n_0$ and surjective for $n \le -n_0$. Then
the sequences
$$0 \to \Tateext_A^n(M,k) \to \Tateext_A^{n+ |\eta|}(M,k) \to \Tateext_A^{n+1}(K,k)
\to 0$$
$$0 \to \Tateext_A^{-n}(K,k) \to \Tateext_A^{-n}(M,k) \to \Tateext_A^{|\eta|-n}(M,k)
\to0$$ are exact for $n \ge n_0$, giving equalities
\begin{eqnarray*}
\beta_{n+1}(K) & = & \beta_{n+ |\eta|}(M) - \beta_n (M) \\
\beta_{-n}(K) & = & \beta_{|\eta|-n}(M) - \beta_{-n} (M)
\end{eqnarray*}
of Betti numbers. Computing Poincar{\'e} series, we obtain
\begin{eqnarray*}
P^{+}_A(K,t) & = & \frac{(1-t^{|\eta|})P^{+}_A(M,t)}{t^{|\eta|-1}} +f(t)\\
P^{-}_A(K,t) & = & (t^{|\eta|}-1)P^{-}_A(M,t) +g(t)
\end{eqnarray*}
for some polynomials $f(t),g(t) \in \mathbb{Z}[t]$. Consequently,
the order of the pole of $P^{+}_A(K,t)$ at $t=1$ is one less than
that of $P^{+}_A(M,t)$, whereas the pole of $P^{-}_A(K,t)$ is one
less than that of $P^{-}_A(M,t)$. Therefore, by Lemma
\ref{finitecx}, the positive complexity of $K$ is $\cx_A^{+}M -1$,
the negative complexity of $K$ is $\cx_A^{-}M -1$,
and by induction we obtain
$$\cx_A^{+}M = \cx_A^{+}K +1 = \cx_A^{-} K+1 = \cx_A^{-}M.$$
This shows that $\mathbf{C}$ has symmetric growth.
\end{proof}

We include the following equivalent version of the theorem as a
corollary. It follows from the elementary fact that for a module of
G-dimension zero, the negative complexity equals the positive
complexity of its dual.

\begin{corollary}\label{equivalent}
Let $(A, \m, k)$ be a local ring, and let $M$ be a finitely
generated $A$-module of G-dimension zero. If $\Ext_A(M \oplus M^*,
k)$ belongs to $\Noethfl H$ for some graded-commutative ring $H$
acting centrally on $D^b(A)$ and such that its action commutes
with dualization of $M$, then $\cx_A^{+} M = \cx_A^{-} M =
\cx_A^{+} M^* = \cx_A^{-} M^*$.
\end{corollary}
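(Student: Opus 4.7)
The plan is to reduce the corollary directly to Theorem \ref{main} and then use a Betti-number symmetry to bring in $M^*$. First I would let $C$ denote the minimal complete resolution of $M$ (which exists and is unique up to homotopy since $M$ has G-dimension zero), so that $M_C = M$ and $M_C^* = M^*$. The hypothesis of the corollary then becomes exactly the hypothesis of Theorem \ref{main} for $C$, and applying that theorem yields $\cx_A^{+} M = \cx_A^{-} M$.

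Next I would invoke the elementary identity $\beta_n(M^*) = \beta_{-(n+1)}(M)$ for all $n \in \mathbb{Z}$, which was already used in the proof of Lemma \ref{finitecx}: splicing a minimal free resolution of $M^*$ with the $A$-dual of the right half of $C$ produces a minimal complete resolution of $M^*$, and its positive (respectively negative) half is, up to a shift, the negative (respectively positive) half of $C$. From the definitions of $\cx_A^{\pm}$ this immediately gives
\[
\cx_A^{+} M^* = \cx_A^{-} M, \qquad \cx_A^{-} M^* = \cx_A^{+} M.
\]
Combining these two equalities with the symmetry $\cx_A^{+} M = \cx_A^{-} M$ from Theorem \ref{main} yields the desired chain $\cx_A^{+} M = \cx_A^{-} M = \cx_A^{+} M^* = \cx_A^{-} M^*$.

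Since the only nontrivial input is Theorem \ref{main}, and the Betti-number identity is purely formal from the construction of complete resolutions, there is no serious obstacle here; the only point to be careful about is that the hypothesis on $\Ext_A(M \oplus M^*, k)$ is symmetric in $M$ and $M^*$, so one does not even need to reapply Theorem \ref{main} to $M^*$ (although one could, using reflexivity $M^{**} \cong M$ together with the fact that the dualization anti-isomorphism $\mathcal D$ squares to the identity, to verify that the central action commutes with dualization of $M^*$ as well).
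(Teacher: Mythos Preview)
Your proposal is correct and follows exactly the approach the paper intends: the paper states only that the corollary ``follows from the elementary fact that for a module of G-dimension zero, the negative complexity equals the positive complexity of its dual,'' which is precisely your application of Theorem~\ref{main} together with the Betti-number identity $\beta_n(M^*)=\beta_{-(n+1)}(M)$. Your write-up is in fact more detailed than the paper's one-line justification; the only minor caveat is that a \emph{minimal} complete resolution exists as stated only when $M$ has no nonzero free summand, but stripping off any free summand does not change the complexities, so this is harmless.
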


\section{Complete intersection dimension zero}\label{CI-dim}

Assume that $A=B/(x_1,\dots,x_c)$, where $B$ is a local ring
and $x_1,\dots,x_c$ is a $B$-regular sequence.
Then there exists
a polynomial ring
$$
H = A[\chi_1, \dots, \chi_c ]
$$
acting centrally on $D^b(A)$, with each \emph{cohomology operator}
$\chi_i$ of degree two. For purposes below, we recall the definition
of the elements $\varphi_M(\chi_i)\in \Ext_A^2(M,M)$, and their action
on $\Ext_A(M,N)$ and $\Ext_A(N,M)$.
(There are actually several definitions for these elements,
but they all agree up to sign; see \cite{AvramovSun}.  The one we give is
from \cite{Eisenbud}).

Let $(C,d)$ be a complex of free $A$-modules.  We lift $C$ to a sequence
of maps $(\widetilde C,\widetilde d)$ of free $B$-modules. Since
$\widetilde d^2\equiv 0$ modulo $(x_1,\dots,x_c)$ we can decompose
$\widetilde d^2$ as
$$
\widetilde d^2=\sum_{i=1}^c x_i\widetilde t_i
$$
for some family $(\widetilde t_i)_{i=1}^c$ of degree $-2$
endomorphisms of the graded $B$-module $\widetilde C$. Then
$t_i=\widetilde t_i\otimes_B A$ become degree $-2$ chain
maps on the complex $C$ which are well-defined and commute
up to homotopy (see \cite{Eisenbud}).  The chain maps $t_i$ on a free
resolution $C$ of $M$ then define
the elements $\varphi_M(\chi_i)\in\Ext^2_A(M,M)$.  The action of
$\varphi_M(\chi_i)$ on $\Ext_A(M,N)$ and
$\Ext_A(N,M)$ is thus determined by composition of chain maps.
If $M=M_C$ for a totally acyclic complex $C$
of free $A$-modules, the element $\varphi_M(\chi_i)\in\Ext_A^2(M,M)$ is
determined by the chain map $t_i:C\to\s^2 C$.

Recall that a \emph{quasi-deformation} of local rings is a diagram
of local homomorphisms $A\to A' \leftarrow B$ such that the map $A
\to A'$ is flat and $A'\leftarrow B$ is onto with kernel generated
by a $B$-regular sequence contained in the maximal ideal of
$B$.  A finitely generated module $M$ over a local ring $A$ is
said to have \emph{finite complete intersection dimension},
denoted $\CIdim_A M<\infty$, if there exists a quasi-deformation
of local rings $A\to A' \leftarrow B$ such that $\pd_B M \otimes_A
A'<\infty$ \cite{AvramovGasharovPeeva}. In \emph{loc. cit.} it is
shown that if $M$ has finite CI-dimension, then $M$ has finite
G-dimension, and when both are finite, that
 $$
 \CIdim_AM=\Gdim_AM=\depth_A A -\depth_A M
 $$

\begin{lemma} \label{commutes}
Let $A=B/(x_1,\dots,x_c)$ where $(B,\n,k)$ is a local
ring, and $x_1,\dots,x_c$ is a $B$-regular sequence contained in
$\n$. Suppose that $M$ is an $A$-module with $\depth_AM=\depth_AA$,
and $\pd_BM<\infty$.
Then the central ring action of the polynomial ring of cohomology operators
$H=A[\chi_1,\dots,\chi_c]$  on $D^b(A)$ commutes with dualization of $M$.
\end{lemma}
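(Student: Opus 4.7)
The plan is to reduce the identity $\mathcal D(\varphi_M(\eta)) = \varphi_{M^*}(\eta)$ to the case of the generators $\chi_1,\dots,\chi_c$ of $H$, and then to exhibit chain maps representing $\varphi_M(\chi_i)$ and $\varphi_{M^*}(\chi_i)$ that are manifestly dual to one another. First, the hypotheses force $\Gdim_A M = 0$: the diagram $A \to A \leftarrow B$ is a quasi-deformation and $\pd_B M < \infty$, so $\CIdim_A M < \infty$; combined with $\depth_A M = \depth_A A$ and the Avramov--Gasharov--Peeva equality recalled above, this yields $\Gdim_A M = \CIdim_A M = 0$. Hence $M$ and $M^*$ are both totally reflexive, and there exists a totally acyclic minimal complex $C$ of finitely generated free $A$-modules with $M = M_C$, whose $A$-dual $C^*$ is totally acyclic with $M_{C^*}\cong M^*$ up to a shift.

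Since $\varphi_M$ and $\varphi_{M^*}$ are graded ring homomorphisms whose images are graded-commutative (they land in the graded center of the respective Ext-algebras) and since $\mathcal D$ is an anti-isomorphism of graded rings, it suffices to verify the identity on the generators $\eta\in\{\chi_1,\dots,\chi_c\}$ together with the degree zero elements $a\in A$. The degree zero case is trivial, since $\varphi_M(a)$ and $\varphi_{M^*}(a)$ are both multiplication by $a$, which is preserved by dualization.

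Fix $i$, and lift $C$ to a sequence $(\widetilde C, \widetilde d)$ of maps of free $B$-modules. As recalled in the excerpt, the decomposition $\widetilde d^2 = \sum_j x_j \widetilde t_j$ produces chain maps $t_j = \widetilde t_j \otimes_B A : C \to \s^2 C$ whose homotopy classes represent $\varphi_M(\chi_j)$. The key observation is that the $B$-dual $\widetilde C^{\vee} = \Hom_B(\widetilde C, B)$ serves as a lifting of $C^*$ over $B$: since each $\widetilde C_n$ is $B$-free, the canonical map $\Hom_B(\widetilde C_n, B) \otimes_B A \to \Hom_A(\widetilde C_n \otimes_B A, A) = (C_n)^*$ is an isomorphism, so $\widetilde C^{\vee} \otimes_B A \cong C^*$ as $A$-complexes. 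Applying $\Hom_B(-, B)$ to $\widetilde d^2 = \sum_j x_j \widetilde t_j$ then gives $(\widetilde d^{\vee})^2 = \sum_j x_j \widetilde t_j^{\vee}$, so the cohomology operators on $C^*$ determined by the lifting $\widetilde C^{\vee}$ are exactly the $A$-duals $t_j^*$, and these represent $\varphi_{M^*}(\chi_j)$.

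Finally, by the very construction of $\mathcal D$ in the excerpt, starting from the chain map $t_i:C\to\s^2 C$ representing $\varphi_M(\chi_i)$, dualizing gives $(\s^2 C)^*\to C^*$, and applying $\s^2$ yields $C^*\to\s^2 C^*$; up to the conventional signs carried by dualization and suspension of a complex, this is exactly the map $t_i^*$ produced above. Hence $\mathcal D(\varphi_M(\chi_i))$ and $\varphi_{M^*}(\chi_i)$ are represented by the same chain map, and so coincide in $\Ext_A^2(M^*,M^*)$. The principal obstacle will be the careful bookkeeping of signs and indexing through dualization and the shift functor, together with confirming that the homotopy-invariance of the construction of cohomology operators legitimately allows us to use the specific lifting $\widetilde C^{\vee}$ to compute $\varphi_{M^*}(\chi_i)$.
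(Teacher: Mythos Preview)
Your proposal is correct and follows essentially the same approach as the paper: reduce to the generators $\chi_i$, represent $\varphi_M(\chi_i)$ by the Eisenbud chain map $t_i$ on a totally acyclic complex $C$, use $\Hom_B(\widetilde C,B)$ as a lifting of $C^*$ to see that the dual factorization of $\widetilde d^2$ yields $t_i^*$ (up to the shift) as the cohomology operator on $C^*$, and conclude that $\mathcal D(\varphi_M(\chi_i))=\varphi_{M^*}(\chi_i)$. Your write-up is in fact more explicit than the paper's on why the reduction to generators is legitimate and why $\Hom_B(\widetilde C,B)\otimes_B A\cong C^*$.
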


\begin{proof} The hypotheses show that the CI-dimension of
$M$ is zero, and therefore $M$ has G-dimension zero.  Let
$C$ be a totally acyclic complex of free modules with $M=M_C$.
Choose $\chi=\chi_i\in H$. It suffices to prove that
$\mathcal D(\varphi_M(\chi))=\varphi_{M^*}(\chi)$.
The element $\varphi_M(\chi)\in\Ext_A(M,M)$ is determined by the
chain map $t:C \to \s^2 C$, as described above.
Therefore $\mathcal D(\varphi_M(\chi))$
corresponds to the chain map $\s^2t^*:C^*\to\s^2 C^*$.
On the other hand, if one uses the lifting
$\Hom_B(\widetilde C,B)$ of $C^*$, and the
factorization of $(\widetilde\partial^*)^2$ dual to that
of $\widetilde\partial^2$ we see that the chain map
$\s^2t^*:C^*\to\s^2 C^*$ defines the element
$\varphi_{M^*}(\chi)\in\Ext_A(M^*,M^*)$, and this is what we wanted to show.
\end{proof}

We have the following corollary of Theorem \ref{main}, in the case
of CI-dimension zero.

\begin{corollary} Let $(A, \m, k)$ be a local ring, and $C$ be a totally
acyclic minimal complex of finitely generated free $A$-modules.
If $M_C\oplus (M_C)^*$ has CI-dimension zero, then $C$ has symmetric
growth.
\end{corollary}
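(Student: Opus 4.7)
The strategy is to reduce to Theorem \ref{main} by faithfully flat base change along a suitable quasi-deformation. Since $M_C\oplus M_C^*$ has CI-dimension zero, we may fix a single quasi-deformation $A\to A' \leftarrow B$ with $A'=B/(x_1,\dots,x_c)$ such that $\pd_B\bigl((M_C\oplus M_C^*)\otimes_A A'\bigr)<\infty$. Write $M=M_C$, $C'=C\otimes_A A'$, and $M'=M\otimes_A A'$.

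First I would record that the base change is benign. Because $A\to A'$ is faithfully flat and $\m A'\subseteq \m'$, the complex $C'$ is a totally acyclic minimal complex of free $A'$-modules, with $M_{C'}=M'$ and $(M_{C'})^*=M^*\otimes_A A'$ by flatness of $\Hom_A(-,A)$ on finitely generated projectives. The ranks are preserved, $\rank_{A'}C'_n=\rank_A C_n$, so $\beta_n^{A'}(M')=\beta_n^A(M)$ for all $n\in\mathbb{Z}$, and symmetric growth of $C'$ over $A'$ is equivalent to symmetric growth of $C$ over $A$.

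Next I would verify both hypotheses of Theorem \ref{main} for $C'$ over $A'$, taking $H=A'[\chi_1,\dots,\chi_c]$, the polynomial ring of cohomology operators acting centrally on $D^b(A')$. Since $\pd_B M'<\infty$ and $\pd_B(M')^*<\infty$, Gulliksen's theorem on cohomology operators yields that $\Ext_{A'}(M'\oplus (M')^*,k')$ is finitely generated as an $H$-module. Hence it lies in $\Noethfl H$, because each graded piece has finite $k'$-dimension and $H_0=A'$ acts through $k'$. The CI-dimension zero hypothesis, via the equality $\CIdim_{A'} M'=\depth A'-\depth_{A'} M'=0$ (and likewise for $(M')^*$), gives $\depth_{A'}M'=\depth_{A'}A'$. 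Lemma \ref{commutes} then applies to both $M'$ and $(M')^*$, showing that the central action of $H$ on $D^b(A')$ commutes with dualization of $M'$.

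Theorem \ref{main} applied to $C'$ over $A'$ therefore yields symmetric growth of $C'$, which by the Betti number identification descends to symmetric growth of $C$. The main obstacle is the base-change bookkeeping: verifying that $C\otimes_A A'$ remains minimal of the same ranks (so that complexities on both sides agree) and that a single quasi-deformation can be chosen to serve $M$ and $M^*$ simultaneously (so that Lemma \ref{commutes} and Gulliksen's theorem can be applied to $M'\oplus (M')^*$ at once, matching the hypotheses of Theorem \ref{main}).
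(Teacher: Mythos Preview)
Your proposal is correct and follows essentially the same route as the paper: base change along the quasi-deformation to reduce to the case $A'=B/(x_1,\dots,x_c)$, then invoke Gulliksen's theorem for the $\Noethfl H$ condition and Lemma~\ref{commutes} for the commutation with dualization, and conclude via Theorem~\ref{main}. The paper's proof is slightly terser but the logical structure is identical; your only superfluous step is applying Lemma~\ref{commutes} to $(M')^*$ as well as $M'$, since Theorem~\ref{main} only requires commutation with dualization of $M'$.
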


\begin{proof} Set $M=M_C$, and let $A\to A'\leftarrow B$
be a quasi-deformation such that $\pd_B((M\oplus M^*)\otimes_A
A')<\infty$. The complex $C$ has symmetric growth if and only if
the complex $C'=C\otimes_AA'$ does, the latter complex being a
totally acyclic minimal complex of finitely generated free
$A'$-modules. Moreover, $\Hom_A(M,A)\otimes_A A'\cong
\Hom_{A'}(M\otimes_A A',A')$, and so $\pd_B \left((M\otimes_A A')
\oplus \Hom_{A'}(M\otimes_A A',A')\right)<\infty$. Also $\depth_AA
=\depth_{A'}A'$, and $\depth_A M=\depth_{A'} M\otimes_A A'$.
Changing notation, we can therefore assume that
$A=B/(x_1,\dots,x_c)$, where $B$ is a local ring and
$x_1\dots,x_c$ is a $B$-regular sequence contained in the 
maximal ideal of $B$, that $C$ is a totally acyclic minimal
complex of finitely generated free $A$-modules, and that $M=M_C$
and $M^*=\Hom_A(M,A)$ are $A$-modules such that $\depth_A
M=\depth_A A$ and $\pd_B M\oplus M^*<\infty$. By Lemma
\ref{commutes} we have that the central ring action of the
polynomial ring of cohomology operators $H=A[\chi_1,\dots,\chi_c]$
on $D^b(A)$ commutes with dualization of $M$.  Therefore by
Theorem \ref{main} it suffices to know that $\Ext_A(M \oplus M^*,
k)$ belongs to $\Noethfl H$, but this is implied by the main
result of \cite{Gulliksen} (see also \cite{Eisenbud} and
\cite{Avramov}).
\end{proof}

When $A$ is a complete intersection, then every finitely generated
$A$-module has finite CI-dimension.
Therefore, in this case, every totally acyclic minimal
complex of free modules has symmetric growth. This follows already from
\cite[Theorem 5.6]{AvramovBuchweitz}, but because of our alternative
proof, we include it here as a corollary.

\begin{corollary}\label{ci}
Let $(A, \m, k)$ be a local complete intersection. Then every
totally acyclic minimal complex of finitely generated free
$A$-modules has symmetric growth.
\end{corollary}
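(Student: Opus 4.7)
The plan is to deduce this corollary directly from the preceding one, whose hypothesis requires that $M_C \oplus M_C^*$ have CI-dimension zero. It therefore suffices to verify that over a local complete intersection, every module of G-dimension zero has CI-dimension zero, and then to apply this observation to both $M_C$ and $M_C^*$.

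First I would unpack the hypothesis on $A$. Since $A$ is a local complete intersection, the completion $\widehat{A}$ admits a presentation $\widehat{A} = B/(x_1, \dots, x_c)$ with $B$ a regular local ring and $x_1, \dots, x_c$ a $B$-regular sequence contained in the maximal ideal of $B$. The diagram $A \to \widehat{A} \leftarrow B$ is then a quasi-deformation, and the regularity of $B$ forces $\pd_B N < \infty$ for every finitely generated $\widehat{A}$-module $N$. Hence every finitely generated $A$-module has finite CI-dimension.

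Next, as recalled in Section \ref{notation}, the image of any differential in a totally acyclic complex of free $A$-modules has G-dimension zero. Therefore both $M_C$ and $M_C^*$ (the latter appearing as an image in the totally acyclic dual complex) have G-dimension zero, and so does their direct sum $M_C \oplus M_C^*$. Since the CI-dimension and G-dimension of $M_C \oplus M_C^*$ are both finite, the equality $\CIdim_A N = \Gdim_A N$ of Avramov-Gasharov-Peeva forces $\CIdim_A (M_C \oplus M_C^*) = 0$. The preceding corollary now applies and delivers the desired symmetric growth of $C$.

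There is no real obstacle: all the substantive work has been carried out in Theorem \ref{main}, Lemma \ref{commutes}, and the preceding corollary. The only minor point requiring care is the transition to $\widehat{A}$ when exhibiting the quasi-deformation, but this is standard and already built into the framework of \cite{AvramovGasharovPeeva}.
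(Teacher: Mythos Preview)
Your proposal is correct and follows essentially the same route as the paper, which simply notes that over a local complete intersection every finitely generated module has finite CI-dimension and invokes the preceding corollary. You have merely filled in the (implicit) step that finite CI-dimension together with G-dimension zero forces CI-dimension zero via the equality $\CIdim_A = \Gdim_A$ from \cite{AvramovGasharovPeeva}.
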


The following construction indicates that finitely generated modules
$M$ over a non-complete intersection ring such that
$M\oplus M^*$ has CI-dimension zero are not at all rare.

\begin{construction} Let $(A_1,\m_1,k)$ and $(A_2,\m_2,k)$
be local rings which both contain their common residue
field $k$. Assume that $A_1=B_1/(x_1,\dots,x_c)$ where
$(B_1,\n_1,k)$ is a regular local ring  and $x_1,\dots,x_c$
is a $B_1$-regular sequence contained in $\n_1$ (so that
$A_1$ is a complete intersection).
Let $\m=\m_1\otimes_kA_2 + A_1\otimes_k\m_2$, a maximal ideal
of $A_1\otimes_kA_2$, and set $A=(A_1\otimes_kA_2)_{\m}$.  Then
$(A,\m,k)$ is a local ring with residue field $k$ also,
and which is not a complete intersection if $A_2$ is not a
complete intersection.

Now let $M_1$ be any maximal Cohen-Macaulay $A_1$-module, equivalently
any $A_1$-module with $\CIdim_{A_1}M_1=0$.  Then
$\Hom_{A_1}(M_1,A_1)$ is also of CI-dimension zero. It is easy to
see that $M=(M_1\otimes_k A_2)_{\m}$ and
$$
M^*=\Hom_A(M,A)\cong(\Hom_{A_1}(M_1,A_1)\otimes_k A_2)_{\m}
$$ 
are finitely generated $A$-modules of CI-dimension zero.
\end{construction}

We end with some questions for further study.

\begin{question} Let $M$ be a finitely generated module over a local
ring $(A,\m,k)$.  If $M$ has finite CI-dimension, then does
$M^*=\Hom_A(M,A)$ have finite CI-dimension as well?
In  particular, if $A=B/(x)$ for
$(B,\n,k)$ a local ring and $x$ a non-zerodivisor in $\n$,
then does $\pd_B M<\infty$ imply that $\pd_B M^*<\infty$ as well?
\end{question}

\begin{question} Do there exist graded commutative rings $H$
acting centrally on $D^b(A)$ such that the action fails
to commute with dualization of a finitely generated $A$-module
$M$?
\end{question}

\section*{Acknowledgements}

This work was done while the second author was visiting Trondheim,
Norway, December-January 2008-9.  He thanks the Algebra Group at the
Institutt for Matematiske Fag, NTNU, for their hospitality and
generous support. The first author was supported by NFR Storforsk
grant no.\ 167130.

\end{document}